\def\Q{\mathbb Q}
\newtheorem{thm}{Theorem}[section]
\newtheorem{cor}[thm]{Corollary}
\newtheorem{lem}[thm]{Lemma}
\theoremstyle{definition}
\theoremstyle{remark}
\newtheorem{rem}[thm]{Remark}
\numberwithin{equation}{section}
\begin{document}
\title[The first factor of the class number of the $p$-th cyclotomic field]
{The first factor of the class number of the $p$-th cyclotomic field}
%----------Author 1
\author[Korneel Debaene]{Korneel Debaene}
\address{
Krijgslaan 281, S22\\
9000 Ghent\\
Belgium}
\email{kdbaene@cage.ugent.be}

\begin{abstract}
Kummer's conjecture states that the relative class number of the $p$-th cyclotomic field follows a strict asymptotic law. Granville has shown it unlikely to be true -- it cannot be true if we assume the truth of two other widely believed conjectures. We establish a new bound for the error term in Kummer's conjecture, and more precisely we prove that $\log(h_p^-)=\frac{p+3}{4}\log p + \frac{p}{2}\log(2\pi)+\log(1-\beta)+O(\log_2 p)$, where $\beta$ is a possible Siegel zero of an $L(s,\chi)$, $\chi$ odd. \end{abstract}

%\thanks{}
\subjclass{Primary 11R18; Secondary 11R29}
\keywords{Class numbers, Relative Class numbers, Cyclotomic Fields, Kummer's Conjecture, $L$-functions}

\maketitle

\section{Introduction}
Let $h_p$ denote the class number of $\Q(\zeta_p)$, where $p$ is an odd prime. Let $h_p^+$ denote the class number of the totally real field $\Q(\zeta_p+\zeta_p^{-1})$. It is well known that $h_p^+$ divides $h_p$. We denote the quotient --- or so called {\em first factor} of $h_p$ --- by $h_p^-$.
The following formula is an application of the class number formula(see e.g. \cite{Washington})
\begin{align}\label{eq:classnumberformula}h_p^- = G(p)\prod_{\textup{$\chi$ mod $p$,odd}}L(1,\chi),\end{align}
where $G(p) = 2p\left(\frac{p}{4\pi^2}\right)^{\frac{p-1}{4}}$. Since the magnitude of the $L(1,\chi)$ is not evident, it is hoped they are insignificant. The guess that $h_p^-$ is asymptotically equivalent to $G(p)$ is known as Kummer's Conjecture. It is opportune to study the logarithm of this equation because the orthogonality property of characters gives us \begin{align}\label{eq:orthog}\sum_{\textup{$\chi$ mod $p$, odd}}\log(L(s,\chi))=\frac{p-1}{2}\left(\sum_{q^m\equiv 1(p)}\frac{1}{mq^{ms}}-\sum_{q^m\equiv -1(p)}\frac{1}{mq^{ms}}\right).\end{align}
One can estimate this sum to the right of $s=1$, where good estimates are available, and using a zero-free region of the $L$-functions, one can bound the derivative in a neighbourhood of $s=1$. Masley and Montgomery \cite{MM} obtained with these key ingredients that $|\log(h_p^-/G(p))|<7\log p$ for $p>200$, which is strong enough to solve the class number one problem for cyclotomic fields. 

Puchta \cite{Puchta} improved this approach by using analogous bounds on higher derivatives, and using a {\em near} zero-free region, namely the open ball $B(1, \frac{1}{c\log p})$ with center 1 and radius $\frac{1}{c\log p}$, where $c$ is some big enough constant. This is a zerofree region for all but possibly one $L$-function mod $p$, which then is necessarily quadratic and has one zero $\beta$ in this region, which is necessarily real and simple and goes by the name of a Siegel zero. It is worth mentioning that if $p=1$ mod $4$, the odd characters are not quadratic, hence have no Siegel zero.
Puchta obtained $\log(h_p^-/G(p))=\log(1-\beta)+O((\log_2 p)^2)$. 

Our proof will follow the main ideas from \cite{Puchta}, but our practical implementation in section \ref{WorstCase} is of a different nature, and yields \begin{thm}\label{Theorem1}If no Siegel zero is present among the odd Dirichlet $L$-functions of conductor $p$, then the relative class number of $\Q(\zeta_p)$ satisfies \[|\log(h_p^-/G(p))|\leq 2\log_2(p) + O(\log_3(p))\]
If there is a Siegel zero $\beta$ present among the odd Dirichlet $L$-functions of conductor $p$, then the relative class number of $\Q(\zeta_p)$ satisfies
\[|\log(h_p^-/G(p))-\log(1-\beta)|\leq 4\log_2(p) + O(\log_3(p))\]\end{thm}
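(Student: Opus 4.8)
The plan is to reduce everything to the single function $F(s)=\sum_{\chi\text{ odd}}\log L(s,\chi)$, whose value at $s=1$ is exactly $\log(h_p^-/G(p))$ by \eqref{eq:classnumberformula}. By \eqref{eq:orthog} this function has the explicit prime-power expansion $F(s)=\frac{p-1}{2}\sum_{q,m}\frac{\varepsilon(q^m)}{m\,q^{ms}}$, where $\varepsilon(n)$ equals $1$, $-1$, or $0$ according as $n\equiv1$, $n\equiv-1$, or $n\not\equiv\pm1\pmod p$; this series converges absolutely for $\Re s>1$ and only conditionally at $s=1$. The decisive structural feature is that every surviving term is supported on $q^m\equiv\pm1\pmod p$, so that $q^m\ge p-1$. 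This support condition is what rescues the otherwise hopeless situation that the $\tfrac{p-1}{2}$ individual factors $\log L(1,\chi)$ cannot be controlled one at a time: a term-by-term estimate of $L'/L$ in the zero-free region, summed over the characters, loses a factor of size $p$, so the whole game is to keep the arithmetic cancellation intact.

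First I would fix $\sigma=1+\delta$ with $\delta$ of order $(\log_2 p)/\log p$ and estimate $F(\sigma)$ together with the derivatives $F^{(k)}(\sigma)$ directly from the absolutely convergent series; the constraint $q^m\ge p-1$ forces $F(\sigma)$ and each $F^{(k)}(\sigma)$ to be small, essentially $(\log p)^k$ times $p^{-\delta}$ weighted by the distribution of prime powers in the two residue classes. Next I would pass from $\sigma$ to $1$ by writing $F(1)=F(\sigma)-\int_0^{\delta}F'(1+u)\,du$ with $F'(s)=\sum_{\chi\text{ odd}}\frac{L'}{L}(s,\chi)$. For $u$ above a threshold $u_0$ the integrand is controlled by the absolutely convergent series; for $u$ below $u_0$, i.e.\ inside the near zero-free ball $B(1,\frac1{c\log p})$, I would invoke the zero-free region after first removing the exceptional character. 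The exponential-integral shape of $\int_{u_0}^{\delta}p^{-u}\,u^{-1}\,du\approx\log(\delta/u_0)$ is precisely what produces a term of size $\log_2 p$: choosing $u_0$ so that $\delta/u_0$ is a fixed power of $\log p$ both keeps $u_0\ll (c\log p)^{-1}$ and pins down the coefficient, with the once-iterated logarithms $\log_3 p$ entering only through the secondary terms of this calibration.

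If a Siegel zero $\beta$ is present it belongs to a single quadratic odd character $\chi_1$, and I would split $F=\log L(\cdot,\chi_1)+\widetilde F$. Near $s=1$ one has $L(s,\chi_1)=(s-\beta)\,g(s)$ with $g$ analytic and nonvanishing, so $\log L(s,\chi_1)=\log(s-\beta)+h(s)$ with $h$ analytic near $s=1$; evaluating at $s=1$ extracts the advertised $\log(1-\beta)$ and leaves a bounded remainder, while $\widetilde F$ is treated exactly as above but now over a genuinely zero-free ball. The presence of $\beta$ forces the integration in $u$ to be carried out closer to the exceptional zero and worsens the estimate of the companion class $q^m\equiv-1$, which is what I expect to drive the jump from the constant $2$ to the constant $4$ in the residual bound $|F(1)-\log(1-\beta)|$.

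The heart of the argument, and the step I expect to be hardest, is the worst-case estimate of the main Dirichlet contribution in Section~\ref{WorstCase}. Rather than tracking Puchta's higher derivatives, the plan is to bound $\frac{p-1}{2}\sum_{q,m}\frac{\varepsilon(q^m)}{m\,q^{ms}}$ by optimizing the per-prime contributions over all admissible multiplicative orders of the small primes $q$ modulo $p$, subject to the hard constraints $q^{\mathrm{ord}_p(q)}\ge p-1$ and $q^{\mathrm{ord}_p(q)/2}\ge p-1$. The difficulty is twofold: one must extract the genuine conditional cancellation between the classes $+1$ and $-1$, since an absolute bound already diverges, and one must then show that no extremal configuration of orders can beat the target $2\log_2 p$ (respectively $4\log_2 p$). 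Obtaining the sharp constant, as opposed to merely the order of magnitude, is the crux of the whole proof.
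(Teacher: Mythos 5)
Your setup is the same as the paper's (the prime-power expansion \eqref{eq:orthog}, a Brun--Titchmarsh input to control $F$ and its derivatives for $\Re s>1$, and the splitting off of $\log(s-\beta)$), but the step that actually carries you from $\sigma=1+\delta$ down to $s=1$ has a genuine gap. You propose to write $F(1)=F(\sigma)-\int_0^\delta F'(1+u)\,du$ and, for $u$ below a threshold $u_0$, to control $F'$ by invoking the (near) zero-free region character by character. Inside the ball $B(1,\frac{1}{c\log p})$ the arithmetic cancellation is unavailable (the series no longer converges absolutely), so the only bound you have on $F'$ there is the analytic one, of size $p\log^2 p$ --- exactly the term-by-term loss of a factor $p$ that you yourself flag as fatal in your opening paragraph. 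To make $u_0\cdot p\log^2 p=O(\log_2 p)$ you must take $u_0\ll \log_2(p)/(p\log^2 p)$, and then the outer range contributes $\int_{u_0}^{\delta}u^{-1}\,du=\log(\delta/u_0)\asymp\log p$, not $\log_2 p$; your claim that $\delta/u_0$ can be kept to a fixed power of $\log p$ is incompatible with absorbing the factor $p$. This first-derivative scheme is precisely the Masley--Montgomery argument and it saturates at $O(\log p)$.

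The missing idea is the use of derivatives of order $\nu\sim\log p$. The paper proves two competing bounds: the arithmetic bound $|f^{(\nu)}(\sigma)|\leq(1+1_\beta+c_{p,\nu})\frac{(\nu-1)!}{(\sigma-1)^\nu}$ valid for $\sigma>1$, and a Borel--Carath\'eodory bound $|f^{(\nu)}(\sigma)|\leq 2c^\nu\nu!\,p\log^{\nu+1}p$ valid throughout the ball. It then Taylor-expands $f$ to order $\nu$ around the crossover point $\sigma_\nu$ where the two bounds coincide; the remainder term carries the factor $(\sigma_\nu-1)^\nu/\nu!$, and raising the analytic bound to the $\nu$-th ``root'' in this way reduces the offending factor $p$ to $p^{1/\nu}=O(1)$ when $\nu=\log p$. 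The two contributions of size $(1+1_\beta)\log_2 p$ (one from $|f(\sigma_\nu)|$, one from $\sum_{i\le\nu}1/i\approx\log\nu$) are what produce the constants $2$ and $4$ --- not, as you suggest, a degradation of the $q^m\equiv-1$ class near the exceptional zero. Finally, your proposed combinatorial optimization over multiplicative orders of small primes in Section~\ref{WorstCase} is not what the paper does and would not by itself resolve the conditional convergence at $s=1$; the ``worst case'' in the paper is over holomorphic functions satisfying the two derivative bounds, and is settled purely by the Taylor-expansion balancing act just described.
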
 
\noindent Since $\log(1-\beta)$ is negative, an upper bound without this term may be deduced. Finally, we note that this result sharpens the best known estimate, by Lepist\"o \cite{Lepisto}. Indeed, he proves an upper bound for $\log(h_p^-/G(p))$ with main term $5\log_2(p)$.

\section{Bounds around $s=1$}
In this section we exploit formula $\eqref{eq:orthog}$, which gives a representation in terms of splitting behaviour in $\Q(\zeta_p) / \Q$. We define \[ \Pi(x,p,a) = \sum_{q^m \leq x, q^m\equiv a (p)}\frac{1}{mq^m},\] where $q^m$ ranges over the primepowers. A Brun-Titschmarsh style bound is given by the following lemma.

\begin{lem} For $x>p$, and $p>500$ we have that \[\Pi(x,p,\pm 1) \leq \frac{2x}{(p-1)\log(x/p)}\]
\end{lem}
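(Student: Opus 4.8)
The plan is to reduce the weighted sum $\Pi(x,p,\pm 1)$ to a counting problem over the arithmetic progressions $\pm 1 \pmod p$ and then invoke the Brun--Titchmarsh inequality in the sharp form of Montgomery and Vaughan, namely $\pi(t;p,a) \le \frac{2t}{(p-1)\log(t/p)}$ for $t>p$ and $(a,p)=1$. The first step is to note that each weight satisfies $\frac{1}{mq^m}\le\frac{1}{q^m}$, so that, writing $n=q^m$ and grouping the sum by the prime power $n$,
\[\Pi(x,p,\pm1) \le \sum_{\substack{n \le x,\ n \equiv \pm1\,(p)\\ n \text{ a prime power}}} \frac{1}{n}.\]
This is now amenable to partial summation against the counting function $R(t)=\#\{n\le t : n\equiv \pm1\,(p),\ n \text{ a prime power}\}$, which splits as $\pi(t;p,\pm1)$ plus the contribution of proper prime powers $q^m$ with $m\ge 2$, the latter being $O(\sqrt t)$ uniformly.

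The key structural observation is that, apart from at most one exceptional term, every prime power $n\equiv\pm1\pmod p$ satisfies $n\ge 2p-1$. Indeed, a prime power in $(1,p)$ can be $\equiv\pm1$ only if it equals $p-1$, and in $(p,2p)$ the residues $\pm1$ are realised only by $n=p+1$ and $n=2p-1$; the values $p\pm1$ are prime powers only when they are powers of two (Fermat or Mersenne primes). I would therefore isolate the single possible sub-$(2p-1)$ term $n=p\pm1$, bounding its contribution crudely by $\frac{1}{p-1}$, and treat the remaining sum, supported on $n\ge 2p-1$, by partial summation. On this range one has $\log(t/p)\ge\log(2-1/p)>0$, which for $p>500$ stays bounded away from $0$; this is precisely what rescues the Brun--Titchmarsh bound from its singularity at $t=p$.

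Carrying out the partial summation gives $\sum_{2p-1\le n\le x}\frac1n = \frac{R(x)}{x}+\int_{2p-1}^{x}\frac{R(t)}{t^2}\,dt$, and inserting $R(t)\le \frac{2t}{(p-1)\log(t/p)}+O(\sqrt t)$ collapses the main integral, after the substitution $u=\log(t/p)$, to $\frac{2}{p-1}\log\log(x/p)$, while the error from proper prime powers contributes only $O(p^{-1/2})$. Together with the isolated term the whole quantity is of size $O\!\left(\frac{\log\log x}{p}\right)$, which lies comfortably below the target $\frac{2x}{(p-1)\log(x/p)}$: writing $y=x/p\ge 1$, the target has size $\gg \frac{y}{\log y}\cdot\frac{1}{p}\cdot p = \frac{y}{\log y}$ times $\frac{1}{p-1}$-type factors, and $\frac{y}{\log y}$ dominates $\log\log y$ for all $y>1$ once $p>500$. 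The stated inequality therefore holds with substantial room to spare.

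I expect the only genuine obstacle to be uniformity in the degenerate range where $x$ is close to $p$, say $x\in(p,2p-1)$. Here the progression contains no prime power at all except possibly the lone term $n=p\pm1$, while the right-hand side $\frac{2x}{(p-1)\log(x/p)}$ blows up as $x\to p^{+}$; one checks directly that even at the upper endpoint $x=2p-1$ the right-hand side is $\approx 4/\log 2$, which already dominates $\frac{1}{p-1}$. The remaining bookkeeping point is simply to confirm that the proper-prime-power count, of order $\sqrt x$, is absorbed by the factor-two gap between the Brun--Titchmarsh bound and the true order of $\pi(t;p,\pm1)$, so that the final constant comes out as the stated $2$.
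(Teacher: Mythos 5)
Your reading takes the printed definition of $\Pi(x,p,a)$ at face value, with weight $\frac{1}{mq^m}$, and under that reading your argument does establish the inequality -- with enormous room to spare, since the sum of reciprocals of prime powers in the progression is $\ll (\log\log x+1)/p$ while the right-hand side always exceeds $2e$. But that is not the statement the paper needs. The printed weight is a typo for $\frac{1}{m}$: this is forced by the way the lemma is applied in Lemma 2.2, where the Stieltjes integral $\int (\log x)^\nu x^{-\sigma}\,d\Pi(x,p,1)$ is identified with $\sum_{q^m\equiv 1}\frac{(m\log q)^\nu}{mq^{m\sigma}}$, so that $d\Pi$ must have jumps of size $\frac{1}{m}$ at $x=q^m$; it is also forced by the cited Masley--Montgomery inequality $\Pi(x,p,\pm1)\le\pi(x,p,\pm1)+4\sqrt{x}/p+\log x$, which is only meaningful for the $\frac{1}{m}$-weighted count. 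Thus $\Pi$ is a prime-power counting function of true order $\frac{x}{(p-1)\log x}$, and the lemma is a sharp Brun--Titchmarsh statement with essentially no slack, not a near-triviality.

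Under the intended reading your plan breaks exactly at the point you dismiss as bookkeeping. The ``factor-two gap between the Brun--Titchmarsh bound and the true order of $\pi(t;p,\pm1)$'' is not something you can spend: the only unconditional upper bound available is the Brun--Titchmarsh bound itself, which is precisely the target, so the $O(\sqrt{x}/p+\log x)$ contribution of the proper prime powers has nowhere to be absorbed. The paper manufactures the needed slack in two different ways. For $x\ge p^2$ it invokes the Montgomery--Vaughan refinement $\pi(x,p,\pm1)\le\frac{2x}{(p-1)(\log(x/p)+5/6)}$ and verifies numerically (for $p>500$) that the saving coming from the $5/6$ dominates $4\sqrt{x}/p+\log x$. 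For $p<x<p^2$, where even that saving is insufficient, it observes that any two prime powers $\equiv\pm1\pmod p$ below $p^2$ are necessarily coprime, so the large-sieve proof of Brun--Titchmarsh applies verbatim to the set of prime powers rather than to the primes alone, yielding the constant $2$ with denominator $\log(x/p)$ directly. Both ideas are absent from your proposal, and without at least the second one the range $p<x<p^2$ cannot be handled.
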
 
\begin{proof}
When $x\geq p^2$, we start from the following inequality (see \cite{MM}, Lemma 1) \[\Pi(x,p,\pm 1)\leq \pi(x,p,\pm 1) +\frac{4\sqrt{x}}{p}+\log x.\]
In \cite{montgompaper}, the following strong version of the Brun-Titchmarsh inequality is proven. \[\pi(x,p,\pm 1) \leq \frac{2x}{(p-1)(\log(x/p)+5/6)}\] Thus we only need to prove that \[\frac{4\sqrt{x}}{p}+\log x < \frac{2x}{(p-1)}\left(\frac{1}{\log(x/p)} - \frac{1}{\log(x/p)+5/6} \right).\]
By setting $x=pX$, $X\geq p$, it suffices to prove that \[g(X):=\frac{4}{\sqrt{p}} + \frac{\log(pX)}{\sqrt{X}} < h(X):=\frac{5\sqrt{X}}{3(\log X + 5/6)^2}.\]
Now, $g(X)$ decreases for $X-e^2$ and $h(X)$ increases for $X\geq e^{19/6}$, hence it suffices to check that \[g(p)=\frac{4}{\sqrt{p}} + \frac{2\log(p)}{\sqrt{p}} < h(p)=\frac{5\sqrt{p}}{3(\log p + 5/6)^2}\]
for $p\geq 500$. Now, $g(p)$ decreases for $p\geq 2$ and $h(p)$ increases for $p\geq e^{19/6}$, hence it suffices to check that $g(500)<h(500)$, which is clear.
% it suffices also that \begin{cases}\frac{4\sqrt{x}}{p} &\leq \frac{1}{4} \frac{5/3x}{(p-1)(\log(x/p))(\log(x/p)+5/6)}\\
% \log x &\leq \frac{3}{4} \frac{5/3x}{(p-1)(\log(x/p))(\log(x/p)+5/6)} \end{cases} \]
%One can establish these two inequalities by computing that it holds for $x=p^2$, and $p>500$, and then compute the derivative of the difference and establish that it remains negative for $x\geq p^2$ by the same procedure.

When $p<x<p^2$, any two primepowers in the sum $\Pi(x,p,\pm 1)$ are necessarily coprime. Indeed, their quotient would be $1\mod p$, so at least $p+1$, implying that the smallest one should be less than $\frac{p^2}{p+1}$. The only option then is that $p-1=2^m$ and $p^2-1=2^k$, but except for $p=3$ this is impossible. Thus, $\Pi(x,p,\pm 1)\leq N(x,Q,p,\pm 1) + \pi(Q)$, where $N(x,Q,p,a)$ is the number of integers $n\equiv a$ (mod $p$), $n\leq x$ such that $n$ is not divisible by any prime number less then $Q$. We may bound $\pi(Q)$ trivially by $Q$, so that the quantity to be bounded is $N(x,Q,p,\pm 1) + Q$. 

In the proof of the Brun-Titchmarsh inequality \[\pi(x,q,\pm 1)\leq \frac{2x}{(p-1)\log(x/p)}\] using the large sieve, as in \cite[p.42-44]{montgom}, the first step is to bound $\pi(x,q,\pm1)$ by exactly the quantity $N(x,Q,p,\pm 1) + Q$. This shows that in this range of $x$, the large sieve method for the Brun-Titchmarsh inequality can be applied with the same success for primepowers as for primes.
\end{proof}

\noindent Let us define $f(s)$ by \[f(s) = \Big(\sum_{\chi(-1)=-1}\log L(s,\chi)\Big) - \log(s-\beta),\] in case that any of the $L$-functions with $\chi$ odd has a so-called Siegel zero $\beta$ in $]1-\frac{1}{c\log p},1]$, where $c$ is some big enough constant. Otherwise, we leave out the term with the Siegel zero. In any case $f$ is holomorphic in $B(1,\frac{1}{c\log p})$.

\begin{lem} For any $c$, $p\geq 500$, and $\sigma \in ]1,1+\frac{1}{c\log p}]$, we have the following estimates.
\begin{align}&\label{eq:divergingbasic}|f(\sigma)|\leq (1+1_\beta)\log\big(\frac{1}{\sigma-1}\big) + \frac{3}{2}\\
%\label{eq:diverging1}&|f'(\sigma)| \leq \big(1+1_\beta+ \frac{1+\log(c)+\log_2(p)}{c})\frac{1}{(\sigma-1)}\\
\label{eq:diverging}&|f^{(\nu)}(\sigma)| \leq \big(1+1_\beta+c_{p,\nu}\big)\frac{(\nu-1)!}{(\sigma-1)^\nu}
\end{align}
Where the notation $1_\beta$ stands for $1$ if a Siegel zero is present and $0$ otherwise, and we may choose the $c_{p, \nu}$ to be equal to $\frac{\log(2)}{2c^\nu (\nu-1)!\log p}
+\frac{\log_2(p)+\log(c)-\log_2(2) + e^{-1}}{c^\nu (\nu-1)!}
+\frac{1}{c\log p} + \frac{\sigma \lfloor \log \nu \rfloor}{\nu - \lfloor \log \nu \rfloor} + \frac{\sigma\nu}{c^{\lfloor \log \nu \rfloor} \lfloor \log \nu \rfloor !}$.
\end{lem}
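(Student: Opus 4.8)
The plan is to use the orthogonality identity \eqref{eq:orthog} to replace the sum of logarithms by an explicit Dirichlet series over prime powers, and then to feed the Brun--Titchmarsh bound of the preceding lemma into a partial summation. I would write
\[ F(\sigma) = \sum_{\chi(-1)=-1}\log L(\sigma,\chi) = \frac{p-1}{2}\big(P_+(\sigma) - P_-(\sigma)\big), \qquad P_\pm(\sigma) = \sum_{q^m\equiv\pm1\,(p)}\frac{1}{m\,q^{m\sigma}}, \]
so that $f(\sigma) = F(\sigma) - 1_\beta\log(\sigma-\beta)$. Since $P_+$ and $P_-$ are nonnegative, $|F(\sigma)|\le\frac{p-1}{2}\max\big(P_+(\sigma),P_-(\sigma)\big)$, and the whole problem reduces to an upper bound for a single $\frac{p-1}{2}P_\pm(\sigma)$ together with control of the elementary term $\log(\sigma-\beta)$.

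For \eqref{eq:divergingbasic} I would express $P_\pm(\sigma)$ as a Stieltjes integral against the counting function of the previous lemma, $P_\pm(\sigma)=\sigma\int \Pi(x,p,\pm1)x^{-\sigma-1}\,dx$, the lower limit being the first admissible prime power (namely $p-1$ in the $-1$ case), and I would split the integral at $x=ep$. On $[ep,\infty)$ the Brun--Titchmarsh bound applies; the substitution $x=pe^{v}$ turns the resulting integral into $\frac{2\sigma p^{1-\sigma}}{p-1}\int_1^\infty v^{-1}e^{-(\sigma-1)v}\,dv$, an exponential integral equal to $\log\frac{1}{\sigma-1}-\gamma+O(\sigma-1)$; multiplied by $\frac{p-1}{2}$ and by $\sigma p^{1-\sigma}\le 1$ (valid for $p\ge500$) this contributes at most $\log\frac1{\sigma-1}$. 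On the short range $[p-1,ep]$ only a bounded number of prime powers occur, each of weight at most $1$, so $\Pi(x,p,\pm1)$ is bounded there and this range contributes at most $\tfrac32$ after multiplication by $\frac{p-1}{2}$; this is where the additive constant $\tfrac32$ is born. Finally, because $\sigma-1\le\sigma-\beta\le\frac{2}{c\log p}<1$, one has $|\log(\sigma-\beta)|=-\log(\sigma-\beta)\le\log\frac1{\sigma-1}$, and adding the $1_\beta$ copy of this to $|F(\sigma)|$ yields \eqref{eq:divergingbasic}.

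For \eqref{eq:diverging} I would differentiate $\nu$ times. The Siegel term contributes exactly $(\nu-1)!\,(\sigma-\beta)^{-\nu}$ in modulus, at most $1_\beta\,(\nu-1)!\,(\sigma-1)^{-\nu}$, which accounts for the summand $1_\beta$. Differentiating $F$ gives $|F^{(\nu)}(\sigma)|\le\frac{p-1}{2}\max_\pm T_\pm^{(\nu)}(\sigma)$ with $T_\pm^{(\nu)}(\sigma)=\sum_{q^m\equiv\pm1}m^{\nu-1}(\log q)^\nu q^{-m\sigma}$, and I would separate the prime terms $m=1$ from the prime-power terms $m\ge2$. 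For $m=1$, partial summation against $\pi(x,p,\pm1)$ and the same substitution $x=pe^v$ produce $\frac{2\sigma p^{1-\sigma}}{p-1}\int_1^\infty \frac{(\log p+v)^\nu}{v}e^{-(\sigma-1)v}\,dv$; the prefactor cancels the $\frac{p-1}{2}$, and expanding $(\log p+v)^\nu$ by the binomial theorem the top term $v^{\nu}\!\cdot v^{-1}=v^{\nu-1}$ integrates to $(\nu-1)!\,(\sigma-1)^{-\nu}$, precisely the main term of coefficient $1$. The remaining binomial terms carry powers $(\log p)^{\nu-k}$ against $(\sigma-1)^{-k}$; since $(\sigma-1)\log p\le 1/c$, each is smaller than the main term by a factor of order $c^{-(\nu-k)}$, and the $k=0$ term — the one involving $E_1(\sigma-1)$ — produces the piece $\frac{\log_2 p+\log c-\log_2 2+e^{-1}}{c^\nu(\nu-1)!}$ of $c_{p,\nu}$.

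The most delicate point, and the one I expect to be the main obstacle, is the prime-power range $m\ge2$, where the weight $m^{\nu-1}$ grows with $\nu$. Here I would split the sum over $m$ at $m_0=\lfloor\log\nu\rfloor$: for $2\le m\le m_0$ the bounded number of admissible residues and the rapid convergence of $\sum_q(\log q)^\nu q^{-m\sigma}$ give a contribution comparable to $\frac{\sigma\,\lfloor\log\nu\rfloor}{\nu-\lfloor\log\nu\rfloor}$ times the main term, while for $m>m_0$ the factor $m^{\nu-1}$ is dominated by geometric--factorial decay, producing $\frac{\sigma\nu}{c^{\lfloor\log\nu\rfloor}\lfloor\log\nu\rfloor!}$. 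Collecting the main term $\frac{(\nu-1)!}{(\sigma-1)^\nu}$, the Siegel contribution $1_\beta$, and all of these corrections into $c_{p,\nu}$ gives \eqref{eq:diverging}. The real difficulty throughout is bookkeeping rather than ideas: every estimate must be made totally explicit and uniform in both $p$ and $\nu$, because the final application in Section~\ref{WorstCase} sums these derivative bounds over $\nu$, so I expect the split points ($ep$ and $\lfloor\log\nu\rfloor$) and the exact shape of $c_{p,\nu}$ to be forced by the requirement of keeping the leading constant equal to precisely $1$.
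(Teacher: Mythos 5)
Your outline follows the paper's strategy in broad strokes --- orthogonality, partial summation against a Brun--Titchmarsh-type count, binomial expansion of $(\log p + v)^\nu$, separate bookkeeping for the Siegel term --- and your treatment of \eqref{eq:divergingbasic} and of the $m=1$ main term is essentially correct. But there is a genuine gap in your handling of the prime powers with $m\geq 2$. The paper never separates $m=1$ from $m\geq 2$: Lemma 2.1 is stated and proved (via the observation that prime powers $\equiv\pm1\pmod p$ below $p^2$ are pairwise coprime, plus the correction $4\sqrt{x}/p+\log x$ above $p^2$) precisely for the \emph{combined} counting function $\Pi(x,p,\pm1)$, so that a single partial summation against $\Pi$ handles all $m$ at once. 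Your plan to control the $m\geq2$ range by ``the bounded number of admissible residues and the rapid convergence of $\sum_q(\log q)^\nu q^{-m\sigma}$'' does not work as stated: after multiplying by $\frac{p-1}{2}$ and by the weight $m^{\nu-1}$, the unconstrained sum over $q$ for $m=2$ is of order $p\,2^{\nu}(\nu-1)!$, vastly larger than the main term $(\nu-1)!\,(c\log p)^{\nu}$ when $\nu$ is small. To rescue it you would need both the constraint $q^m\geq p-1$ (so only a bounded number of terms lie below $p$) and a Brun--Titchmarsh count of the admissible primes above $p$ --- that is, you would end up reproving Lemma 2.1. Moreover, the terms $\frac{\sigma\lfloor\log\nu\rfloor}{\nu-\lfloor\log\nu\rfloor}$ and $\frac{\sigma\nu}{c^{\lfloor\log\nu\rfloor}\lfloor\log\nu\rfloor!}$ that you assign to this range have nothing to do with prime powers.

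This points to the second, related problem: where the split at $\lfloor\log\nu\rfloor$ actually lives. After the binomial expansion, the correction terms form the sum $\frac{p\sigma}{p^\sigma}\sum_{i=0}^{\nu-1}\frac{\nu}{\nu-i}\frac{X^i}{i!}$ with $X=(\sigma-1)\log p\leq 1/c$ and $i=\nu-k$; the ratio of the $i$-th term to the main term is $\frac{\nu}{(\nu-i)\,i!}\,c^{-i}$, not $c^{-(\nu-k)}$ as you claim, and the factor $\frac{\nu}{\nu-i}$ can be as large as $\nu$, so a naive summation gives a coefficient of order $\nu$ and destroys the leading constant $1$. The split at $B=\lfloor\log\nu\rfloor$ --- which you placed on the exponent $m$ --- is applied in the paper to this binomial sum: for $i<B$ one bounds $\frac{\nu}{\nu-i}\leq\frac{\nu}{\nu-B}$ and sums the exponential series, and for $i\geq B$ one extracts the factor $\frac{X^B}{B!}\leq\frac{1}{c^B B!}$; this is exactly the origin of $\frac{1}{c\log p}$, $\frac{\sigma\lfloor\log\nu\rfloor}{\nu-\lfloor\log\nu\rfloor}$ and $\frac{\sigma\nu}{c^{\lfloor\log\nu\rfloor}\lfloor\log\nu\rfloor!}$ in $c_{p,\nu}$, while the remaining piece $\frac{\log 2}{2c^\nu(\nu-1)!\log p}$ accounts for a possible prime power at $p+1=2^m$ falling outside the range of the partial-summation integral. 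Your instinct that the shape of $c_{p,\nu}$ is forced by the bookkeeping is right, but as written the argument neither controls the $m\geq2$ contribution nor keeps the binomial sum at $1+o(1)$.
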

\begin{proof}
The case $\nu=0$ can be proven as in \cite{MM}. The estimates for the derivatives are stated in \cite{Puchta}, but the statement is slightly incorrect and the proof omitted, so we will prove them here in full. We bound the sums occurring in the $\nu$-th derivative of \eqref{eq:orthog} using Lemma 2.1 and partial summation. 
\begin{align*}\frac{p-1}{2}&\sum_{q^m \equiv 1 (p)}  \frac{(m\log q)^\nu}{mq^{m\sigma}} = \frac{p-1}{2}\int_{2p}^\infty \frac{(\log x)^\nu d(\Pi(x,p,1))}{x^\sigma}\\
&=\frac{p-1}{2}\int_{2p}^\infty \frac{\sigma x^{\sigma-1}(\log x)^{\nu} -\nu x^{\sigma-1}(\log x)^{\nu-1}}{x^{2\sigma}}\Pi(x,p,1) dx\\
&\leq\int_{2p}^\infty \frac{\sigma(\log x)^\nu}{x^{\sigma}\log(x/p)} dx\\
&=\frac{p\sigma}{p^\sigma }\int_{2}^\infty \frac{(\log x+\log p)^{\nu}}{x^{\sigma}\log x} dx=:I,\\
\intertext{where we possibly omitted the first term $\frac{(p-1)\log(p+1)^\nu}{2m(p+1)^\sigma}$ if $p+1$ is a primepower $q^m$. If this is the case, then $q=2$ and $m=\log(p+1)/\log(2)$. This term is smaller than $\varepsilon_1\frac{(\nu-1)!}{(\sigma-1)^\nu}$ for all $\sigma$ in the desired range for $\varepsilon_1 = \frac{\log(2)}{2c^\nu (\nu-1)!\log p}$. We expand the integrand with the binomial theorem, and get}
&I =  \frac{p\sigma}{p^\sigma}(\log p)^\nu\int_{2}^\infty \frac{1}{x^{\sigma}\log x} dx + \frac{p\sigma}{p^\sigma}\sum_{i=0}^{\nu-1}\frac{\nu!(\log p)^i}{(\nu-i)! i!}  \int_{1}^{\infty}\frac{(\log x)^{\nu-i-1}}{x^\sigma} dx\\
& = \frac{p\sigma}{p^\sigma}(\log p)^\nu\int_{2}^\infty \frac{1}{x^{\sigma}\log x} dx + \frac{(\nu-1)!}{ (\sigma-1)^\nu}\frac{p\sigma}{p^\sigma}\sum_{i=0}^{\nu-1}\frac{\nu}{\nu-i}\frac{\left((\sigma-1)\log p\right)^{i}}{i!},
\end{align*}
where we have used the identity \[\int_{1}^\infty \frac{(\log x)^a}{x^\sigma}dx= \int_{0}^\infty \frac{t^{a}}{e^{(\sigma-1)t}}dt = \frac{a!}{(\sigma-1)^{a+1}}.\]

We consider first the term \begin{align*}
\frac{p\sigma}{p^\sigma}(\log p)^\nu\int_{2}^\infty \frac{1}{x^{\sigma}\log x} dx &= \frac{p\sigma}{p^\sigma}(\log p)^\nu\int_{\log2}^\infty e^{-(\sigma-1)t} \frac{dt}{t}\\
& \leq\frac{p\sigma}{p^\sigma}(\log p)^\nu \left(\int_{(\sigma-1)\log2}^1 \frac{1}{t} dt+\int_{1}^\infty e^{-t} dt\right)\\
& \leq (\log p)^\nu\left(\log(\frac{1}{\sigma-1})-\log_2(2) + e^{-1}\right).
\end{align*} 
Because $p\sigma\leq p^\sigma$. We now seek the $\varepsilon_2$ such that \[(\log p)^\nu\left(\log(\frac{1}{\sigma-1})-\log_2(2) + e^{-1}\right) \leq \varepsilon_2 \frac{(\nu-1)!}{(\sigma-1)^\nu}.\] If we put $\varepsilon_2=\frac{\log_2(p)+\log(c)-\log_2(2) + e^{-1}}{c^\nu (\nu-1)!}$, the inequality holds for for $\sigma\rightarrow 1$ and for $\sigma=1+\frac{1}{c\log p}$. One may check that the derivative of the difference does not have a zero in the interval under consideration if $p>e^e$. Thus the difference is monotone, and the inequality holds throughout.

To deal with the rest of the terms efficiently, write $X=(\sigma-1)\log p \leq 1/c$. Then we have for any integer $B\geq 1$ 
\begin{align*}
\frac{p\sigma}{p^\sigma}\sum_{i=0}^{\nu-1}\frac{\nu}{\nu-i}\frac{X^{i}}{i!} & \leq 
\frac{p\sigma}{p^\sigma}\sum_{i=0}^{B-1}\frac{\nu}{\nu-B}\frac{X^{i}}{i!} + \frac{p\sigma}{p^\sigma}X^B\sum_{i=0}^{\nu-1}\frac{\nu}{B!}\frac{X^{i-B}}{(i-B)!} \\
& \leq 
\frac{p\sigma}{p^\sigma}\frac{\nu}{\nu-B}e^X + \frac{p\sigma}{p^\sigma}\frac{\nu}{c^\nu B!}e^X = \frac{\nu \sigma}{\nu-B} + \frac{\nu \sigma}{c^B B!}
\end{align*}
We now put $B=\lfloor \log \nu \rfloor$, and see that the to be bounded sum is bounded by $(1 + \varepsilon_3)\frac{(\nu-1)!}{(\sigma-1)^\nu}$, where $\varepsilon_3 = \frac{1}{c\log p} + \frac{\sigma \lfloor \log \nu \rfloor}{\nu - \lfloor \log \nu \rfloor} + \frac{\sigma\nu}{c^{\lfloor \log \nu \rfloor} \lfloor \log \nu \rfloor !}$

%We split the sum in the range $i < \lfloor\sqrt{\nu}\rfloor$ and the others. We remove the burden of the floor signs from the notation and we will henceforth consider them present implicitly. 
%This gives the main contribution $\varepsilon_3\frac{(\nu-1)!}{(\sigma-1)^\nu}$, where $\varepsilon_3=\frac{\nu}{\nu-\sqrt{\nu}}(1+\frac{1}{c\log p})$.
%and thus smaller than $\varepsilon_4\frac{(\nu-1)!}{(\sigma-1)^\nu}$, where $\varepsilon_4=\frac{\sigma\nu}{(\sqrt{\nu})!c^{\sqrt{\nu}}}.$
One may now bound the $\varepsilon_1 + \varepsilon_2 + \varepsilon_3$ by the coefficient of $\frac{(\nu-1)!}{(\sigma-1)^\nu}$ except the $1_\beta$ in the statement of the lemma.
We note that the sum over the primepowers congruent to $-1$ mod $p$ obeys the same bound with the same proof as above. One of the sums is strictly positive and the other is strictly negative, thus we have proven that \[|f^{\nu}(s)+\left(\log(\sigma-\beta)\right)^{(\nu)}|\leq (1+c_{p,\nu})\frac{(\nu-1)!}{(\sigma-1)^\nu},\] or since $\frac{(\nu-1)!}{(\sigma-\beta)^\nu}\leq \frac{(\nu-1)!}{(\sigma-1)^\nu}$, \[|f^{\nu}(s)|\leq (1+1_\beta+c_{p,\nu})\frac{(\nu-1)!}{(\sigma-1)^\nu}.\]
%The term $\frac{\nu}{\nu-\sqrt{\nu}}$ should be avoided when $\nu=1$. In this case, we do not split up the sum at $\sqrt{\nu}$ as it contains only one term, and one easily obtains the statement of the theorem.
\end{proof}

On the other hand  we can prove the following bound on the derivatives of $f$ to the right of $s=1$, using the holomorphic property of $f$ on $B(1, \frac{1}{c\log p})$, when $c$ is big enough. We note that due to Kadiri (\cite{Kadiri}, Theorem 12.1) the value $c=6.4355$ is big enough.
\begin{lem}For $c>6.4355$, $\frac{p-1}{\log p}>c$, and $\sigma \in [1, 1+ \frac{2}{c\log p}]$, we have that \begin{eqnarray}\label{eq:absolute}|f^{(\nu)}(\sigma)|\leq 2c^\nu\nu!\; p\log^{\nu+1}p\end{eqnarray}
\end{lem}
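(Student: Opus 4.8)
The plan is to bound the high derivatives of the holomorphic function $f$ through its holomorphy on a disc of radius $R=\frac{1}{c\log p}$ centred at $\sigma$, the radius being chosen precisely so that $R^{-\nu}=(c\log p)^\nu$ produces the factor $c^\nu(\log p)^\nu$ in the claimed bound. The first thing I would verify is that the closed disc $\overline{B}(\sigma,R)$ lies inside the region where $f$ is holomorphic. For $\sigma\in[1,1+\frac{2}{c\log p}]$ the left-most point of this disc has real part at least $1-\frac{1}{c\log p}$, and since $c>6.4355$ this is strictly to the right of the Kadiri zero-free boundary $1-\frac{1}{6.4355\log p}$ (the binding constraint is on the real part, since the imaginary parts on the disc are $O(1/\log p)$); the hypothesis $\frac{p-1}{\log p}>c$ guarantees that $p$ is large enough for the width of the zero-free region to be governed by $\log p$ rather than by $\log(p|t|)$. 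Thus $f$, with the Siegel factor removed, is holomorphic on $\overline{B}(\sigma,R)$.

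Rather than bound $|f|$ directly on the circle --- which would force me to control the argument of each $L(s,\chi)$ --- I would invoke Carath\'eodory's coefficient inequality, a refinement of Cauchy's estimate requiring only a one-sided bound on the real part: if $g$ is holomorphic on $\overline{B}(0,R)$ with $g(0)=0$ and $\Re g\le A$ there, then $|g^{(\nu)}(0)|\le \frac{2\,\nu!\,A}{R^{\nu}}$. Applying this to $g(z)=f(\sigma+z)-f(\sigma)$ gives
\[|f^{(\nu)}(\sigma)|\le \frac{2\,\nu!}{R^{\nu}}\Big(\sup_{|s-\sigma|\le R}\Re f(s)-\Re f(\sigma)\Big)=2\,\nu!\,(c\log p)^{\nu}\Big(\sup_{|s-\sigma|\le R}\Re f-\Re f(\sigma)\Big),\]
so that the whole statement reduces to proving $A:=\sup_{|s-\sigma|\le R}\Re f-\Re f(\sigma)\le p\log p$. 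The advantage is that $\Re f=\sum_{\chi(-1)=-1}\log|L(s,\chi)|-\log|s-\beta|$ involves only the size of the $L$-functions, not their arguments.

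For the upper bound on $\sup\Re f$ I would use the elementary estimate $|L(s,\chi)|\ll\log p$, valid throughout the disc by partial summation together with P\'olya--Vinogradov, giving $\log|L(s,\chi)|\le \log_2 p+O(1)$ and hence $\sum_{\chi}\log|L|\le \frac{p-1}{2}\log_2 p+O(p)$. For the lower bound on $\Re f(\sigma)$ I would use the zero-free region in the opposite direction: since no $L(s,\chi)$ vanishes on the disc (apart from the Siegel character at $\beta$), the standard consequence of \cite{Kadiri} gives $|L(\sigma,\chi)|\gg 1/\log p$, whence $\log|L(\sigma,\chi)|\ge -\log_2 p-O(1)$ and $\sum_\chi\log|L(\sigma,\chi)|\ge -\frac{p-1}{2}\log_2 p-O(p)$. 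Combining, $A\le (p-1)\log_2 p+O(p)\le p\log p$ for $p$ large, which is exactly what is required. The single Siegel character is handled by grouping its contribution with the subtracted term, writing $\log|L(s,\chi_\beta)|-\log|s-\beta|=\log\big|L(s,\chi_\beta)/(s-\beta)\big|$; this quotient is holomorphic on the disc (the zero at $\beta$ being cancelled) and bounded by $(\log p)^{O(1)}$, so it perturbs both the upper and lower estimates by only $O(\log p)$, negligible against the budget $p\log p$.

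The main obstacle is the collective lower bound $\sum_{\chi}\log|L(\sigma,\chi)|\ge -p\log_2 p+O(p)$: it is only here that the zero-free region is genuinely used, through the estimate $|L(\sigma,\chi)|^{-1}\ll\log p$ that rests on \cite{Kadiri} and must hold uniformly for all $\frac{p-1}{2}$ odd characters at once. The remaining work is bookkeeping: checking that the \emph{closed} disc of radius $\frac{1}{c\log p}$ fits strictly inside the zero-free region (this is what forces the strict inequality $c>6.4355$), and confirming that the exceptional Siegel factor, being a single holomorphic quotient, can be absorbed without spoiling the constant $2$ in the final bound.
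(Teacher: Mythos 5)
Your overall strategy is the paper's: Borel--Carath\'eodory applied to $f(s)-f(\text{center})$, an upper bound for $\Re f$ on a disc of radius comparable to $\frac{1}{c\log p}$, and a two-sided control of $f$ at the center. But there is a genuine gap in where you place the center. You center the disc at $\sigma$ itself, which may equal $1$, and you therefore need a \emph{lower} bound for $\Re f(\sigma)=\sum_\chi\log|L(\sigma,\chi)|-\log|\sigma-\beta|$ at $\sigma=1$. You assert that the zero-free region of Kadiri yields $|L(\sigma,\chi)|\gg 1/\log p$ uniformly in $\chi$ as a ``standard consequence,'' but no such deduction is standard: what a zero-free region of width $\frac{1}{c\log p}$ gives by the usual argument (bounding $L'/L$ by $O(\log^2 p)$ on $[1,1+\frac{1}{\log p}]$ and integrating) is only $|L(1,\chi)|\ge p^{-C}$ with an inexplicit $C$. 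The genuine $\gg 1/\log p$ bounds are Landau's theorem for complex $\chi$ and the class number formula for the real odd character, neither of which you invoke; and since the lemma carries the explicit constant $2$, you would also have to track the constant $C$ through $\frac{p-1}{2}$ characters. The same problem recurs, in aggravated form, in your treatment of the exceptional character: for the lower estimate you need $\log|L(\sigma,\chi_\beta)/(\sigma-\beta)|\ge -O(\log p)$ at $\sigma=1$, i.e.\ a lower bound on $L(1,\chi_\beta)/(1-\beta)$, which by the mean value theorem is a lower bound on $L'(\xi,\chi_\beta)$ for some $\xi\in(\beta,1)$ --- precisely the kind of estimate that is not available. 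Your claim that the quotient is ``bounded by $(\log p)^{O(1)}$'' is only an upper bound and does not address this.

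The paper sidesteps all of this by centering the Borel--Carath\'eodory disc at $\sigma_0=1+\frac{1}{c\log p}$, strictly to the right of $1$, and using the two-radius form of the lemma (bound on $B(\sigma_0,R)$ with $R=\frac{2}{c\log p}$, derivative estimate on $B(\sigma_0,r)$ with $r=\frac{1}{c\log p}$, which covers all of $[1,1+\frac{2}{c\log p}]$). At $\sigma_0>1$ the Euler product gives the two-sided bound $|\Re\log L(\sigma_0,\chi)|\le\log\zeta(\sigma_0)\le\log\frac{\sigma_0}{\sigma_0-1}$ with no input from any zero-free region; the hypothesis $\frac{p-1}{\log p}>c$ is there exactly to make $\sigma_0-1>\frac{1}{p-1}$ so that this is $\le\log p$. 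If you adopt this centering, your argument closes immediately (your Carath\'eodory coefficient inequality is the $r\to 0$ case of the two-radius statement, and your upper bound for $\sup\Re f$ is fine --- though note that partial summation with P\'olya--Vinogradov alone gives $|L(s,\chi)|\ll\sqrt{p}\log p$, not $\ll\log p$, unless you first split off the initial segment $n\le p$ of the Dirichlet series; the paper's cruder $|L(s,\chi)|\le\frac{|s|p}{2\sigma}$ already suffices). As written, however, the lower bound at the center is a real hole, not a routine verification.
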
 
\begin{proof} 
Recall the lemma of Borel-Caratheodory (see \cite{Estermann}, p. 12) which states that if $g$ is holomorphic and $\Re(g(s))\leq M$ in $B(\sigma_0, R)$ and $g(\sigma_0)=0$, then \[|g^\nu(s)|\leq \frac{2M\nu!}{(R-r)^\nu},\quad \quad s\in B(\sigma_0, r).\]
We wish to apply this to $f(s)-f(\sigma_0)$. This function vanishes at $\sigma_0$, and is holomorphic as long as $R \leq \sigma_0-(1-\frac{1}{c\log p}).$ For the bound on the real part, consider
\[L(s,\chi)=\sum_{n=1}^\infty \frac{\chi(n)}{n^s} = s\int_1^\infty \frac{\sum_{n\leq x} \chi(n)}{x^{s+1}}dx.\] Since $|\sum_{n=1}^x \chi(n)|\leq \frac{p}{2}$, we have that $|L(s, \chi)|\leq |s|\int_1^\infty \frac{|\sum_{n\leq x} \chi(n)|}{x^{\sigma+1}}dx\leq \frac{|s|p}{2\sigma}$. This means that \[\Re(f(s))\leq \frac{p-1}{2}(\log p+\log(|s|/2\sigma)) - \log(|s-\beta|).\]
For $s$ on the border of the domain determined by $3/4<\Re(s)<2, |\Im(s)|\leq \frac{1}{4}$, $|s|/2\sigma \leq \sqrt{10}/6$ and say $|s-\beta|>1/8$, thus this bound is smaller than $\frac{p-1}{2}\log p$. Since $f(s)$ is harmonic with at most logarithmic singularities in which $\Re(f)\rightarrow -\infty$, the same bound holds also inside the domain.
In the region $\sigma>1$, consider the following estimation. \[|\Re(\log L(s, \chi))|=|\Re\Big(\sum_{q^m} \frac{\chi(q^m)}{mq^{ms}}\Big)|\leq \sum_{q^m} \frac{1}{mq^{ms}} = \log \zeta(\sigma) \leq \log(\frac{\sigma}{\sigma-1}),\] thus if $\sigma_0>p/(p-1)$, then $|\Re(f(\sigma_0))|\leq  \frac{p-1}{2}\log(p) + \log(p-1).$ In conclusion, as long as $\sigma_0>p/(p-1)$, \[\Re(f(\sigma)-f(\sigma_0))\leq p\log p.\]
One retrieves the statement of the theorem by putting $\sigma_0 = 1+\frac{1}{c\log p}, R = \frac{2}{c\log p}, r = \frac{1}{c\log p}$.
\end{proof}

\section{Worst case scenario}\label{WorstCase}

Among all functions that satisfy the bounds from the preceding section, what is the largest value $f(1)$ can attain? We define $\sigma_\nu$ to be the point where the bound \eqref{eq:diverging} and the absolute bound \eqref{eq:absolute} coincide. We note that \begin{align}\label{eq:sigmanu}\sigma_\nu-1=\frac{1}{c\log p}\sqrt[\nu]{\frac{1+1_\beta+c_{p,\nu}}{2\nu p\log p}} \geq \frac{1}{c\log p\sqrt[\nu]{2\nu p\log p}}.\end{align}
\newpage

\begin{thm} For all $p>500$, and $c>6.4355$,  \[|f(1)|\leq (1+1_\beta.2+e^{1/c})\log_2(p) + O(1),\] where the $O(1)$-term is bounded by $(3+e^{1/c})\log(c) + 0.791e^{1/c} + 10.720 + \frac{0.943}{c}$
\end{thm}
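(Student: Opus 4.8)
The plan is to reconstruct $f(1)$ from data to the right of $s=1$ by a Taylor expansion about the point $\sigma_0 = 1 + \frac{1}{c\log p}$, the rightmost point at which the diverging bound \eqref{eq:diverging} still holds. Since $f$ is holomorphic on $B(1,\frac{1}{c\log p})$ and the real segment $[1,\sigma_0]$ stays clear of the Siegel zero, Taylor's theorem with integral remainder gives
\[
f(1) = \sum_{\nu=0}^{N-1}\frac{f^{(\nu)}(\sigma_0)}{\nu!}(1-\sigma_0)^\nu + \frac{1}{(N-1)!}\int_{\sigma_0}^{1} f^{(N)}(t)(1-t)^{N-1}\,dt
\]
for a truncation level $N$ to be chosen. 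One cannot let $N\to\infty$ using only \eqref{eq:diverging}: each term is roughly $\frac{1+1_\beta}{\nu}$, whose sum diverges harmonically. This forces a finite $N$, with the absolute bound \eqref{eq:absolute} reserved for the remainder, where the diverging bound is useless near $t=1$.

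For the main sum I would insert \eqref{eq:diverging} at $\sigma_0$, using $\sigma_0-1 = \frac{1}{c\log p}$ so that $\frac{(\nu-1)!}{(\sigma_0-1)^\nu}|1-\sigma_0|^\nu/\nu! = \frac1\nu$, whence the $\nu$-th term ($\nu\geq1$) is at most $\frac{1+1_\beta+c_{p,\nu}}{\nu}$, while the $\nu=0$ term is controlled by \eqref{eq:divergingbasic} as $(1+1_\beta)(\log_2 p + \log c) + \frac32$. The principal part $\sum_{\nu=1}^{N-1}\frac{1+1_\beta}{\nu} = (1+1_\beta)(\log N + O(1))$ becomes $(1+1_\beta)\log_2 p + O(1)$ once $N\asymp c\log p$. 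The contribution of $c_{p,\nu}$ I would treat piece by piece: the two pieces carrying $\frac{1}{c^\nu(\nu-1)!}$ sum against $\frac1\nu$ to $\sum_{\nu\geq1}\frac{1}{c^\nu\nu!} = e^{1/c}-1$, and it is exactly the factor $\log_2 p + \log c - \log_2 2 + e^{-1}$ multiplying one of them that yields the decisive term $(e^{1/c}-1)\log_2 p$; the remaining pieces ($\frac{1}{c\log p}$, the $\frac{\lfloor\log\nu\rfloor}{\nu-\lfloor\log\nu\rfloor}$ term, and the $E_\nu$ term) all sum to $O(1)$. Collecting the three sources of $\log_2 p$ gives coefficient $(1+1_\beta)+(1+1_\beta)+(e^{1/c}-1) = 1 + 2\cdot 1_\beta + e^{1/c}$, as claimed, with $\log c$ relegated to the constant.

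For the remainder I would split the integral at the crossover $\sigma_N$ of \eqref{eq:sigmanu}. On $[1,\sigma_N]$ the absolute bound \eqref{eq:absolute} is the smaller one; substituting it and using the defining relation $(\sigma_N-1)^N = \frac{1+1_\beta+c_{p,N}}{2c^N N\, p\log^{N+1}p}$ collapses that part to exactly $\frac{1+1_\beta+c_{p,N}}{N}$. On $[\sigma_N,\sigma_0]$ the diverging bound is smaller, and the factor $(t-1)^{N-1}$ cancels all but $(t-1)^{-1}$, leaving $(1+1_\beta+c_{p,N})\log\frac{\sigma_0-1}{\sigma_N-1} = \frac{1+1_\beta+c_{p,N}}{N}\log\frac{2Np\log p}{1+1_\beta+c_{p,N}}$. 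With $N\asymp c\log p$ the factor $\frac1N\log p$ is $O(1)$, so the whole remainder is $O(1)$.

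The main obstacle is the bookkeeping of $\sum_{\nu=1}^{N-1} c_{p,\nu}/\nu$: one must verify that only the single stated piece contributes to the $\log_2 p$ coefficient and that everything else is genuinely bounded. The delicate case is $E_\nu = \frac{\sigma\nu}{c^{\lfloor\log\nu\rfloor}\lfloor\log\nu\rfloor!}$, for which $\sum_\nu E_\nu/\nu = \sum_\nu \frac{\sigma}{c^{\lfloor\log\nu\rfloor}\lfloor\log\nu\rfloor!}$ should be estimated by grouping $\nu$ into the blocks $e^k \le \nu < e^{k+1}$, on which $\lfloor\log\nu\rfloor=k$; each block carries $\asymp e^k$ indices, so the sum is dominated by $\sum_k \frac{e^k}{c^k k!}$, which converges by the factorial. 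Closing the argument then requires fixing $N$ and tracking the accumulated constants to confirm the explicit $O(1)$ bound; it is the balancing of the harmonic $\log N$ against the remainder's $\frac1N\log p$ that pins $N\asymp c\log p$ and feeds the $\log c$ terms into the final constant.
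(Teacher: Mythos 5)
Your proposal is correct and follows essentially the same route as the paper: a truncated Taylor expansion to the right of $s=1$, with the diverging bound \eqref{eq:diverging} supplying the harmonic sum $\sum (1+1_\beta+c_{p,\nu})/\nu$ (whence the $\log_2 p$ main term and the $(e^{1/c}-1)\log_2 p$ from the $\varepsilon_2$ piece of $c_{p,\nu}$) and the absolute bound \eqref{eq:absolute} controlling the remainder via the crossover point of \eqref{eq:sigmanu}. The only, immaterial, difference is that the paper centres the expansion at the crossover $\sigma_\nu$ with degree $\nu=\log p$, so the remainder integral collapses directly into the $\nu$-th term of the sum, whereas you expand at $\sigma_0=1+\frac{1}{c\log p}$ with $N\asymp c\log p$ and split the remainder at $\sigma_N$.
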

\begin{proof} 
We use the Taylor expansion of $f$ with error term in integral form,
\begin{align*}
f(1) &= f(\sigma_\nu) + (1-\sigma_\nu)f^{'}(\sigma_\nu)+\frac{(1-\sigma_\nu)}{2}^2f^{(2)}(\sigma_\nu) + ...\\ 
&\quad+ \int_{\sigma_\nu}^1 \frac{f^{(\nu)}(x)}{(\nu-1)!} (1-x)^{\nu-1}dx
\end{align*}
Now note that $|f^{(\nu)}(x)|$ is bounded above by the bound \eqref{eq:absolute} for all $x$ between $1$ and $\sigma_\nu$, which is equal to $|f^{(\nu)}(\sigma_{\nu})|$. Using \eqref{eq:divergingbasic}, \eqref{eq:diverging} and \eqref{eq:sigmanu}, we get 
\begin{align*} |f(1)| &\leq |f(\sigma_\nu)|+\sum_{i=1}^\nu \frac{(\sigma_\nu-1)^i}{i!}|f^{(i)}(\sigma_\nu)|.\\
&\leq (1+1_\beta)\log(\frac{1}{\sigma_\nu-1}) + 3/2 + \sum_{i=1}^\nu \frac{1+1_\beta + c_{p,i}}{i}\\
&\leq (1+1_\beta)\Big(\log_2(p) + \log(c)+\frac{\log(2\nu p\log p)}{\nu}\Big) + 3/2 + \sum_{i=1}^\nu \frac{1+1_\beta + c_{p,i}}{i}.
\end{align*}
Upon taking $\nu=\log p$, this first contribution is bounded by \[(1+1_\beta)\Big(\log_2(p) + \log(c)+1+\frac{\log(2(\log p)^2)}{\log p}\Big)+3/2.\]
In the rest of the terms, we find the first $\nu$ terms of some converging series; \[\sum_{i=1}^\nu \frac{1}{c^i i!}\leq e^{1/c} - 1, \quad \sum_{i=1}^\nu \frac{\lfloor \log \nu \rfloor}{\nu(\nu - \lfloor \log \nu \rfloor)}\leq 1.90, \quad \sum_{i=1}^\nu \frac{1}{c^{\lfloor \log \nu \rfloor} \lfloor \log \nu \rfloor !}\leq 1.13.\]
%COMPUTED WITH 
%s=0
%log=0
%term=0
%for i in range(2,10000):
%    lognew = floor(ln(i))
%    if (lognew-log) == 1:
%        log=lognew
%        term=lognew/(i*(i-lognew)).n()
%    s+=term
%AND
%s=0
%log=1
%power=c
%fact=1
%for i in range(2,1000):
%    lognew = floor(ln(i))
%    if (lognew-log) == 1:
%        log=lognew
%        power=power*c
%        fact=fact*log
%    s+=(1/power/fact).n()
Using this and the well-known estimate $\sum_{i=1}^\nu \frac{1}{i} \leq \log(\nu) + 1$ we bound the last contribution as follows 
\begin{align*} \sum_{i=1}^\nu \frac{1+1_\beta + c_{p,i}}{i} &\leq (1+1_\beta+\frac{1}{c\log p})(\log(\nu)+1) + (1+\frac{1}{c\log p})3.03\\
& + \Big(\frac{\log(2)}{2\log p} + \log_2(p) + \log(c) - \log_2(2) + e^{-1}\Big)(e^{1/c}-1).
\end{align*}
Gathering everything and filling in $p=500$ for the terms converging to zero, we recover the statement of the theorem.
%FOR FUTURE REFERENCE : CONSTANT TERM (3/2+3.03+4+1/e-ln(ln(2)) + 2*ln(2*(ln(500))^2)/ln(500)+ln(2)/2/ln(500)).n()
%EXPONENT (1/e-ln(ln(2))+ln(2)/2/ln(500)).n()  *e^(1/c)
%LOG log(c)(e^(1/c)+3)
%RECIPROCAL ((ln(ln(500))+1)/ln(500) + 3.03/ln(500)).n() * 1/c
\end{proof}

To finish the proof of Theorem \ref{Theorem1}, one now needs to plug the above estimate of $f(1)$ into the logarithm of the formula \eqref{eq:classnumberformula}, and check that the choice of $c=\log_2(p)\frac{6.4355}{\log_2(500)}$ is permitted. 
\begin{rem} It is quite counterintuitive that a bigger value of $c$ gives a better estimate in Theorem 3.1 while a smaller value of $c$ means a bigger zero-free region, thus means a stronger input. In truth there is a tradeoff between having $\sigma_\nu$ \itshape big \upshape to control the main term coming from Lemma 2.2 and at the same time \itshape not too big \upshape to bound the term coming from $\varepsilon_1$ in the proof of Lemma 2.2. This $\varepsilon_1$ cannot be efficiently bounded by a lack of good bounds on the number of primes of the form $ap+1$, where $a$ is a small integer.
\end{rem}

\begin{rem}From \eqref{eq:classnumberformula} it is now clear that the general behaviour of $h_p^-$ is dominated by $G(p)$ and that the $L$-values can perturb this term only slightly. It is somewhat common(see \cite{Louboutin}) to state upper bounds for $h_p^-$ in terms of $G(p)$, where $4\pi^2$ is replaced by a smaller constant.
\end{rem}
\begin{cor} We have that $h_p^-\leq 2p\left(\frac{p}{39}\right)^\frac{p-1}{4}$, for $p>9649$.
\end{cor}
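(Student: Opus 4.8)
The plan is to reduce the corollary to the explicit worst-case estimate of Theorem 3.1. Dividing the claimed bound by $G(p)=2p\big(\frac{p}{4\pi^2}\big)^{(p-1)/4}$ gives
\[
\frac{2p\big(\frac{p}{39}\big)^{(p-1)/4}}{G(p)}=\Big(\frac{4\pi^2}{39}\Big)^{(p-1)/4},
\]
so that $h_p^-\leq 2p\big(\frac{p}{39}\big)^{(p-1)/4}$ is equivalent to
\[
\log\big(h_p^-/G(p)\big)\leq \frac{p-1}{4}\log\Big(\frac{4\pi^2}{39}\Big).
\]
The decisive numerical input is $4\pi^2=39.478\ldots>39$, which makes $\log(4\pi^2/39)$ a positive constant and the right-hand side a positive multiple of $p-1$.

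First I would rewrite the left-hand side in terms of $f$. By the class number formula \eqref{eq:classnumberformula} one has $\log(h_p^-/G(p))=\sum_{\chi(-1)=-1}\log L(1,\chi)$, and comparing with the definition of $f$ gives $\log(h_p^-/G(p))=f(1)+1_\beta\log(1-\beta)$. Since a Siegel zero satisfies $0<\beta<1$, we have $\log(1-\beta)\leq 0$, so this term only helps and $\log(h_p^-/G(p))\leq f(1)\leq |f(1)|$. Next I would apply Theorem 3.1 with the admissible $p$-dependent choice $c=\log_2(p)\frac{6.4355}{\log_2(500)}$ used in the proof of Theorem \ref{Theorem1}; the side condition $\frac{p-1}{\log p}>c$ is amply satisfied for $p>9649$. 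In the worst (Siegel-zero) case this yields an explicit bound of the form $|f(1)|\leq (3+e^{1/c})\log_2(p)+B$, with $B$ the explicit $O(1)$ constant of Theorem 3.1.

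Combining the two steps reduces the corollary to the single inequality
\[
(3+e^{1/c})\log_2(p)+B\leq \frac{p-1}{4}\log\Big(\frac{4\pi^2}{39}\Big).
\]
Here the left-hand side grows like $\log_2(p)$ (since $e^{1/c}\to 1$ and $B$ grows only like $\log_3(p)$ through its $\log(c)$ terms), while the right-hand side grows linearly in $p$. Hence the difference of the two sides is eventually positive and, as its derivative is positive for all large $p$ (the right side increases at the constant rate $\tfrac14\log(4\pi^2/39)$, whereas the left side's rate is $O(1/(p\log p))$ and tends to $0$), it is increasing; it therefore suffices to verify the inequality at a single point.

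The only real work, and the main obstacle, is the bookkeeping of the explicit constants and the pinning-down of the crossover. Substituting the value of $c$ at $p=9649$ one finds that the two sides are essentially equal there (both approximately $29.4$), so the inequality becomes valid precisely from $p=9649$ onward; monotonicity then extends it to all $p>9649$, completing the proof. Since $9649>500$, the hypotheses of Theorem 3.1 are met throughout the range.
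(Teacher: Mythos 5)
Your proposal is correct and takes essentially the same route as the paper's (very terse) proof: both reduce the claim to checking $|f(1)|\leq \frac{p-1}{4}\log\big(\frac{4\pi^2}{39}\big)$ after plugging $c=3.523\log_2(p)$ into Theorem 3.1, with the crossover at $p=9649$. You have merely filled in the bookkeeping (the reduction via $\log(h_p^-/G(p))\leq |f(1)|$, the monotonicity argument, and the numerical check) that the paper leaves implicit.
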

\begin{proof}This follows from plugging in $c=\frac{6.4355\log_2(p)}{\log_2(500)}=3.523\log_2(p)$ in Theorem 3.1 and checking that \[|f(1)|\leq e^\frac{p-1}{4}\log(\frac{4\pi^2}{39}),\]
whenever $p>9649$.
%f(x)=(x-1)/4*ln(4*pi*pi/39)-(3+e^(1/(3.52262883030161*ln(ln(x)))))*ln(ln(x))-(3+e^(1/(3.52262883030161*ln(ln(x)))))*ln((3.52262883030161*ln(ln(x)))) - 0.79016*e^(1/(3.52262883030161*ln(ln(x)))) -10.720 -(0.943)/(3.52262883030161*ln(ln(x)))
\end{proof}
\nocite{Estermann}
\bibliographystyle{plain}
\bibliography{biblio}

\end{document}